\documentclass[12pt,english]{amsart}

\usepackage[T1]{fontenc}
\usepackage[utf8]{inputenc}
\usepackage[main=english]{babel}

\usepackage{geometry}
\usepackage{amsmath,amsthm,amssymb,amsfonts,mathtools}
\usepackage{mathrsfs}
\usepackage{esint}

\usepackage{graphicx}   
\usepackage{float}
\usepackage{tikz}       

\usepackage{comment}
\usepackage{indentfirst}
\usepackage[hidelinks]{hyperref}

\theoremstyle{definition}
\newtheorem{theorem}{Theorem}[section]
\newtheorem{lemma}[theorem]{Lemma}
\newtheorem{remark}[theorem]{Remark}

\newtheorem{conjecture}[theorem]{Conjecture}
\newtheorem{question}[theorem]{Question}
\title{Minimal spheres and scalar curvature}
\author{Talant Talipov}
\date{}

\begin{document}

\begin{abstract}
    In 1982, S.-T. Yau conjectured that there exist four distinct embedded minimal two-spheres in any manifold diffeomorphic to \(S^3\). Wang--Zhou confirmed this conjecture for Riemannian three-spheres when the metric is bumpy or has positive Ricci curvature. We prove the following quantitative version of their theorem. Suppose that \((S^3,g)\) has positive Ricci curvature and scalar curvature \(R_g\ge \Lambda_0>0\). Then there exist four distinct embedded minimal two-spheres \(\Sigma_1,\ldots,\Sigma_4\subset (S^3,g)\) such that \(\operatorname{area}_{g}(\Sigma_i)\le 12\pi(i+1)/\Lambda_0\) for every \(i=1,\ldots,4\). We apply this result to a problem posed by S.-T. Yau in 1987 on whether the planar two-spheres are the only minimal spheres in ellipsoids centered at the origin in \(\mathbb R^4\). Haslhofer--Ketover proved that ellipsoids with one sufficiently large semi-axis contain at least one non-planar embedded minimal two-sphere. We prove that such ellipsoids contain at least three non-planar embedded minimal two-spheres.
\end{abstract}

\maketitle

\section{Introduction}

In 1982, S.-T. Yau posed the following conjecture.

\begin{conjecture}[{S.-T. Yau \cite[Problem~89]{Y}}]
\label{conj:yau}
    There exist four distinct embedded minimal two-spheres in any manifold diffeomorphic to \(S^3\).
\end{conjecture}

Simon--Smith \cite{SS} proved the existence of at least one embedded minimal two-sphere in \(S^3\) with an arbitrary metric; see also Colding--De Lellis \cite{CD}, De Lellis--Pellandini \cite{DP}, and Ketover \cite{K}. Later, White \cite{W1}, using degree methods, proved the existence of at least two embedded minimal two-spheres when the metric has positive Ricci curvature, and at least four embedded minimal two-spheres when the metric is sufficiently close to the round metric. Using the catenoid estimate of Ketover--Marques--Neves \cite{KMN}, Haslhofer--Ketover \cite{HK} proved the existence of at least two embedded minimal two-spheres for bumpy metrics. We also remark that branched immersed minimal two-spheres were obtained by Sacks--Uhlenbeck \cite{SU} using min-max theory for harmonic maps; see also Colding--Minicozzi \cite{CM}. In a breakthrough paper, Wang--Zhou \cite{WZ} proved the existence of at least four embedded minimal two-spheres when the metric is bumpy or has positive Ricci curvature, thus confirming Conjecture \ref{conj:yau} in these cases.

The main result of this paper is the following.

\begin{theorem}
\label{thm:main}
    Let \(\Lambda_0>0\). Suppose that \((S^3,g_0)\) satisfies \(\operatorname{Ric}_{g_0}>0\) and \(R_{g_0}\ge \Lambda_0\). Then there exist four distinct embedded minimal two-spheres
    \[
    \Sigma_1,\ldots,\Sigma_4 \subset (S^3,g_0)
    \]
    such that, for every \(i=1,\ldots,4\),
    \[
    \operatorname{area}_{g_0}(\Sigma_i)\le \frac{12\pi(i+1)}{\Lambda_0}.
    \]
\end{theorem}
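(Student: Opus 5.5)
Our plan is to follow the Wang--Zhou strategy and make it quantitative. Their theorem comes from Simon--Smith min-max over four families of two-spheres. These are the $\mathbb{RP}^i$-families, $i=1,\dots,4$, obtained as level sets of the quadratic polynomials $a_0+\sum_j a_jx_j$ restricted to $S^3\subset\mathbb{R}^4$. This produces width values $\sigma_1\le\sigma_2\le\sigma_3\le\sigma_4$. In positive Ricci curvature, each width is realized by a single embedded minimal two-sphere of multiplicity one, and the four spheres are distinct, since a Lusternik--Schnirelmann argument rules out $\sigma_i=\sigma_{i+1}$ coming from one sphere. We would take this existence and distinctness statement from \cite{WZ} as given. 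It then suffices to prove the upper bound
\[
\sigma_i(S^3,g_0)\le \frac{12\pi(i+1)}{\Lambda_0},\qquad i=1,\dots,4.
\]

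To bound the widths we would use Hamilton's Ricci flow, which is natural because $\operatorname{Ric}_{g_0}>0$. By Hamilton, the normalized flow converges to a round metric, and the unnormalized flow $g(t)$ becomes extinct at a finite time $T$. Along the flow, the lower scalar curvature bound gives $R_{g(t)}\ge \Lambda_0/(1-2\Lambda_0 t/3)$ by the maximum principle, and hence $T\le 3/(2\Lambda_0)$. The key quantity is the width $\sigma_i(t)$ of the $i$-th family in $g(t)$. It is Lipschitz in $t$, and at almost every time its derivative is bounded by a first-variation computation on a min-max sphere $\Sigma_t$. For the area of a fixed surface,
\[
\frac{d}{dt}\operatorname{area}_{g(t)}(\Sigma)=-\int_\Sigma (R-\operatorname{Ric}(\nu,\nu)).
\]
For a minimal $\Sigma$, the Gauss equation gives $R-\operatorname{Ric}(\nu,\nu)=K_\Sigma+\tfrac12|A|^2+\tfrac12 R$. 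Using Gauss--Bonnet with genus zero, we obtain
\[
\frac{d}{dt}\sigma_i\le -4\pi-\tfrac12\int_{\Sigma_t}R\le -4\pi,
\]
in the sense of forward difference quotients (the argument of Marques--Neves and Colding--Minicozzi for width under Ricci flow). In the multiplicity-one setting of \cite{WZ}, the min-max limit is a single sphere and this estimate applies. Since $\sigma_i(t)\to0$ as $t\to T$, integrating gives $\sigma_i(0)\le 4\pi T\le 6\pi/\Lambda_0$. This already has the form $C/\Lambda_0$, but with constant $6\pi$ independent of $i$.

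That constant cannot be right for $i\ge2$: the bound must grow with $i$, because distinct spheres can have close areas only in a limited way. The target $12\pi(i+1)/\Lambda_0=6\pi(2i+2)/\Lambda_0$ suggests that the min-max varifold for $\sigma_i$ should be allowed to have total multiplicity or number of components up to $2i+2$. We therefore need a more robust version of the estimate. Along a generic flow the min-max limit for $\sigma_i(t)$ may be a union of spheres with multiplicities $m_k$, before Wang--Zhou regularity is restored at $t=0$. Gauss--Bonnet then gives $\frac{d}{dt}\sigma_i\le -4\pi\sum_k m_k$, which is only helpful, but one also needs some a priori lower bound. The safer route is the naive monotonicity $\frac{d}{dt}\sigma\le-\tfrac12\int R\,d\|V\|\le -\tfrac12 R_{\min}(t)\,\sigma$. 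Combined with $R_{\min}(t)\ge \Lambda_0/(1-2\Lambda_0t/3)$, this gives $\sigma(t)\le\sigma(0)(1-2\Lambda_0t/3)^{3/4}$. It would then be combined with the Gauss--Bonnet term and a bound on the number of components. That number is at most $i+1$ for the $\mathbb{RP}^i$ family in Wang--Zhou's multiplicity and index analysis, and each component counts with multiplicity at most two, since genus-zero limits may be double covers of one-sided $\mathbb{RP}^2$ pieces, whose Euler characteristic is $1$.

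The main obstacle is this last step. We must justify the differential inequality for the width at nonsmooth times when the min-max varifold may be disconnected, may carry multiplicity, or may contain a $\mathbb{RP}^2$ component whose Euler characteristic contributes only $2\pi$. We also need a count of components per family that yields exactly the constant $12\pi(i+1)$. We would first attempt this by approximating $g(t)$ with bumpy metrics, where Wang--Zhou's multiplicity-one results hold, deriving the inequality there, and passing to the limit using continuity of widths in the metric.
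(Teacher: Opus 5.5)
Your reduction to bounding the widths and your extinction-time bound $T\le 3/(2\Lambda_0)$ are fine, but the central estimate points the wrong way. The inequality $\frac{d}{dt}\sigma_i\le -4\pi-\frac12\int_{\Sigma_t}R$ is correct. Integrating it over $[0,T)$, however, gives $\sigma_i(0)\ge \sigma_i(T^-)+4\pi T=4\pi T$. That is a \emph{lower} bound on the initial width, and combined with an upper bound on $T$ it yields nothing. Your claimed conclusion $\sigma_i(0)\le 6\pi/\Lambda_0$ is in fact false already for $i=1$: the unit round $S^3$ has $\Lambda_0=6$ and $\sigma_1=4\pi>\pi$. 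There $T=1/4=3/(2\Lambda_0)$ and the width decays at exactly the rate $16\pi$. Your ``safer route'' $\sigma'\le -\frac12 R_{\min}\sigma$ has the same defect, since it bounds $\sigma(t)$ from above in terms of $\sigma(0)$. To exploit $\sigma_i(t)\to 0$ as $t\to T$ you need the reverse inequality $\sigma_i'\ge -C_i$ a.e., which gives $\sigma_i(0)\le C_iT\le 3C_i/(2\Lambda_0)$. The target constant corresponds to $C_i=8\pi(i+1)$, not to a count of $2i+2$ components with multiplicity.

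A lower bound on $\sigma_i'$ requires bounding $\int_{\Sigma_t}(R-\mathrm{Ric}(\nu,\nu))=8\pi+\int_{\Sigma_t}(\mathrm{Ric}(\nu,\nu)+|A|^2)$ from \emph{above}. Gauss--Bonnet gives no upper control of $\int|A|^2$ or $\int R$. The missing idea is to use the Morse index.

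At generic (bumpy) times the min-max surface is a single multiplicity-one sphere (by Frankel plus Wang--Zhou) of index $m\le i$. Set $q=\mathrm{Ric}(\nu,\nu)+|A|^2>0$; the second variation then shows that the weighted problem $-\Delta u=\lambda q u$ has $\lambda_m\ge 1$. This is the Laplace spectrum of the conformal metric $q\,g_{\Sigma_t}$ on $S^2$. The Karpukhin--Nadirashvili--Penskoi--Polterovich inequality $\lambda_m\operatorname{area}\le 8\pi m$ therefore yields $\int_{\Sigma_t}q\le 8\pi i$. Hence $\sigma_i'\ge -8\pi(i+1)$ and $\sigma_i(0)\le 12\pi(i+1)/\Lambda_0$.

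Separately, your discussion of one-sided $\mathbb{RP}^2$ components is moot: $S^3$ contains no embedded $\mathbb{RP}^2$. The component and multiplicity bookkeeping plays no role in the actual constant.
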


This gives a quantitative version of the existence theorem of Wang--Zhou \cite{WZ}. Our proof uses Simon--Smith min-max theory \cite{SS}, Hamilton's Ricci flow \cite{H} and the sharp eigenvalue inequality of Karpukhin--Nadirashvili--Penskoi--Polterovich for metrics on \(S^2\) \cite{KNPP}.

For \(i=1\), the corresponding sharp inequality was proved by Marques--Neves \cite{MN}, with equality if and only if the metric has constant sectional curvature. For \(i=2\), a similar estimate, with a larger constant, can be obtained by combining results of Marques--Neves \cite{MN} and Haslhofer--Ketover \cite{HK}. For related results connecting areas of minimal spheres with the volume of the ambient three-sphere, see Ambrozio--Montezuma \cite{AM}.

\subsection*{Applications.}
We apply Theorem \ref{thm:main} in the following setting. Given $a> b> c> d>0$, consider the ellipsoid
\[
E(a,b,c,d):= \left\{\frac{x_1^2}{a^2}+ \frac{x_2^2}{b^2}+\frac{x_3^2}{c^2}+\frac{x_4^2}{d^2}=1\right\}\subset\mathbb{R}^4.
\]
It contains at least four minimal planar two-spheres, which are obtained by the intersection of $E(a,b,c,d)$ with the coordinate hyperplanes $\{x_i=0\}$.

In 1987, S.-T. Yau asked the following question.

\begin{question}[{S.-T. Yau \cite[Section~4]{Y1}}]
\label{ques:y}
    Are the only minimal two-spheres in an ellipsoid centered about the origin in $\mathbb{R}^4$ the planar ones?
\end{question}

Almgren \cite{A} proved that in round spheres, the only immersed minimal two-spheres are planar.  White \cite{W} proved that when \(a\), \(b\), \(c\), and \(d\) are sufficiently close to each other, all minimal two-spheres in \(E(a,b,c,d)\) are planar. In \cite{HK}, Haslhofer--Ketover proved that for fixed \(b,c\) and \(d\), if \(a\) is chosen sufficiently large, then the ellipsoid \(E(a,b,c,d)\) contains a non-planar embedded minimal two-sphere, thus obtaining a negative answer to Question \ref{ques:y}. Bettiol--Piccione \cite{BP} proved that if at least two of the semi-axes \(b,c,d\) are equal, then there are arbitrarily many distinct non-planar embedded minimal two-spheres in \(E(a,b,c,d)\), provided \(a\) is sufficiently large. As a corollary of Theorem \ref{thm:main}, we further address Question \ref{ques:y} and obtain the following result. 

\begin{theorem}
\label{thm:yau}
For fixed $b,c$ and $d$, if $a$ is chosen sufficiently large, then the ellipsoid $E(a,b,c,d)$ contains at least three non-planar embedded minimal two-spheres.
\end{theorem}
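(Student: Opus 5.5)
We will deduce Theorem \ref{thm:yau} from Theorem \ref{thm:main} by comparing the four area bounds with the areas of the planar spheres. The ellipsoid \(E(a,b,c,d)\) is a convex hypersurface in \(\mathbb{R}^4\), so its sectional curvatures, and hence its Ricci curvature, are positive. We also need a lower bound \(R\ge\Lambda_0\) on its scalar curvature. This bound must be uniform in \(a\) after a suitable normalization, or at least good enough that the constants \(12\pi(i+1)/\Lambda_0\) can be compared with the planar areas.

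The plan has three steps.
\begin{enumerate}
\item \textbf{Scalar curvature bound.} For a hypersurface in \(\mathbb{R}^4\) with principal curvatures \(k_1,k_2,k_3\), the Gauss equation gives \(R=2(k_1k_2+k_1k_3+k_2k_3)\). As \(a\to\infty\), the ellipsoid looks like a long cylinder \(\mathbb{R}\times E(b,c,d)\) over the 2-dimensional ellipsoid, with two caps. On the cylindrical part, \(R\) is close to \(2K_{E(b,c,d)}\), where \(K\) denotes Gauss curvature. Near the caps (\(|x_1|\approx a\)) all principal curvatures are of order \(1\) in the directions transverse to \(x_1\), so \(R\) is not smaller there. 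The one degenerate direction has curvature \(\sim 1/a\) and enters only through products with bounded terms. We therefore expect
\[
\inf R_{E(a,b,c,d)} \ge 2\min K_{E(b,c,d)} - o(1) \quad \text{as } a\to\infty.
\]
Making this precise requires an explicit computation of the second fundamental form of the ellipsoid, \(k_i\sim\) coordinate expressions \((\ldots)/(a_i^2 |\nabla f|)\).
\item \textbf{Compare with the planar spheres.} The planar spheres are \(\{x_j=0\}\cap E\). Three of them, \(\{x_2=0\},\{x_3=0\},\{x_4=0\}\), have area growing linearly in \(a\). Only \(\Sigma_{P}:=\{x_1=0\}\), which is the 2-ellipsoid \(E(b,c,d)\), has bounded area. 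Theorem \ref{thm:main} gives four distinct spheres \(\Sigma_1,\dots,\Sigma_4\) with areas at most \(60\pi/\Lambda_0\), where \(\Lambda_0\approx 2\min K_{E(b,c,d)}\) is independent of \(a\). For \(a\) large, every planar sphere other than \(\Sigma_P\) has area exceeding \(60\pi/\Lambda_0\). Hence at most one of \(\Sigma_1,\dots,\Sigma_4\) is planar, namely \(\Sigma_P\), and at least three are non-planar.
\item \textbf{Verify the area of the long planar spheres.} For example, \(\{x_4=0\}\cap E\) is the 2-ellipsoid \(E(a,b,c)\), whose area is at least a constant times \(a\cdot\min(b,c)\), which tends to \(\infty\). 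This step is routine.
\end{enumerate}

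The main obstacle is step 1: a uniform-in-\(a\) positive lower bound for scalar curvature. The near-cap region and the transition region need care, because a principal curvature in the \(x_1\)-direction may be tiny while the others are bounded below. I would first check that the sum \(k_1k_2+k_1k_3+k_2k_3\) is always at least the product of the two largest curvatures. I would then bound those two curvatures below uniformly by expressing them via the Hessian of \(f=\sum x_i^2/a_i^2\) restricted to the tangent space, divided by \(|\nabla f|\). Since \(|\nabla f|\) is uniformly bounded above, say by \(2/d\), and the Hessian restricted to the directions orthogonal to \(e_1\) within the tangent space is at least \(2/b^2\) times the metric on a 2-dimensional subspace, a min–max argument gives two principal curvatures each at least \(d/b^2\). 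This yields \(R\ge 2d^2/b^4\), which is independent of \(a\). That bound suffices for step 2.
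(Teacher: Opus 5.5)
Your proposal is correct and follows essentially the same route as the paper. You use the same min--max argument on the at-least-two-dimensional subspace $T_xE\cap\{v_1=0\}$, giving two principal curvatures $\ge d/b^2$ and hence $R\ge 2d^2/b^4$ independently of $a$. You then compare the resulting bound $30\pi b^4/d^2$ with the linearly growing areas of the planar spheres $\{x_j=0\}\cap E$, $j=2,3,4$, so that at most one of the four spheres can be planar; the paper makes the linear growth explicit with a coarea estimate, and the asymptotic cylinder heuristic at the start of your Step 1 is unnecessary once you have the explicit bound.
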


\subsection*{Acknowledgments.}
The author is grateful to Yevgeny Liokumovich for his invaluable supervision and encouragement throughout this work. The author is thankful to Brendan Isley for several helpful discussions. This work was partially supported by the Dr. Sergiy and Tetyana Kryvoruchko Graduate Scholarship in Mathematics.

\section{Main result}

We first recall the Simon--Smith min--max theory \cite{SS}; see \cite{Chu, CL, CLhom, CLW1, CLW2, CD, DP, HK, HK1, K, LWY, LW, WZ} for recent developments in the field.

Let \((S^3,g)\) be a Riemannian three-sphere. Consider the spaces
\[
\mathcal X:=\{\phi(S^2)\big|\phi: S^2\to S^3 \text{ is a smooth embedding}\},
\]
and 
\[
\mathcal Y:=\{\phi(S^2)\big|\phi:S^2\to S^3 \text{ is a smooth map whose image is a one-dimensional graph}\}.
\]
Denote by $\mathcal S=\mathcal X\cup \mathcal Y$ endowed with the unoriented smooth topology.  

By Hatcher's proof of Smale's conjecture \cite{Ha}, the space \(\mathcal S\) is homotopy equivalent to the space of geodesic spheres (including degenerate) in the round three-sphere. Hence,
\[
H^*(\mathcal S,\partial\mathcal S;\mathbb Z_2)
\simeq
\mathbb Z_2[\alpha]/(\alpha^5),
\]
where
\[
\alpha\in H^1(\mathcal S,\partial\mathcal S;\mathbb Z_2)
\]
is a generator.

A \textit{sweepout} is a continuous family of elements of \(\mathcal S\),
\[
\Phi:X\to \mathcal S,
\]
where \(X\) is a finite cubical complex and \(\Phi(\partial X)\subset \partial\mathcal S\). For \(i=1,\ldots,4\), we say that a sweepout \(\Phi\) \textit{detects \(\alpha^i\)} if
\[
\Phi^*(\alpha^i)\neq 0
\qquad\text{in } H^i(X,\partial X;\mathbb Z_2).
\]
Let \(\mathcal P_i\) be the collection of sweepouts detecting \(\alpha^i\). We define the \(i\)-th Simon--Smith width by
\[
L_i(g)
=
\inf_{\Phi\in\mathcal P_i}
\sup_{x\in \operatorname{dom}(\Phi)}
\operatorname{area}_g(\Phi(x)).
\]

The Simon--Smith min--max theorem (see Smith \cite{SS}, Colding--De Lellis \cite{CD}, De Lellis--Pellandini \cite{DP} and Ketover \cite{K}) and the multiplicity-one theorem of Wang--Zhou \cite{WZ}, imply that, under positive Ricci curvature, the widths \(L_1(g),\ldots,L_4(g)\) are realized by four distinct embedded minimal two-spheres. Hence Theorem \ref{thm:main} follows from the following width estimate.

\begin{theorem}
\label{thm:higher-ss-width-bound}
Let \(\Lambda_0>0\). Suppose \((S^3,g_0)\) satisfies \(\operatorname{Ric}_{g_0}>0\) and \(R_{g_0}\ge \Lambda_0\).
Then, for every \(i=1,\ldots,4\),
\[
L_i(g_0)\le \frac{12\pi(i+1)}{\Lambda_0}.
\]
\end{theorem}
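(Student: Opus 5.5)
Run Hamilton's Ricci flow $g(t)$ from $g_0$ and compare the widths along the flow with those of the round limit. Since $\operatorname{Ric}_{g_0}>0$, Hamilton's theorem \cite{H} gives a solution on $[0,T)$ that shrinks to a round point; after rescaling, $g(t)/(4(T-t))$ converges smoothly to the round metric of curvature $1$. Two facts are needed. First, the scalar curvature lower bound improves: from $\partial_t R=\Delta R+2|\operatorname{Ric}|^2\ge \Delta R+\tfrac23 R^2$ and the maximum principle,
\[
R_{g(t)}\ge \frac{\Lambda_0}{1-\tfrac{2}{3}\Lambda_0 t},
\]
so $T\le 3/(2\Lambda_0)$. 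Second, the widths $L_i(g(t))$ are controlled in time by a differential inequality.

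The key step is a Marques--Neves type monotonicity. For a smooth surface $\Sigma$ moving with the flow, $\frac{d}{dt}\operatorname{area}_{g(t)}(\Sigma)=-\int_\Sigma (R-\operatorname{Ric}(\nu,\nu))\,dA$. If $\Sigma$ is minimal, the Gauss equation gives
\[
R-\operatorname{Ric}(\nu,\nu)=\tfrac12 R+K_\Sigma+\tfrac12|A|^2 .
\]
Thus $\frac{d}{dt}\operatorname{area}\le -\tfrac12\int_\Sigma R-4\pi-\tfrac12\int_\Sigma|A|^2$ for a sphere. Since $L_i(g(t))$ is Lipschitz in $t$ and is realized at each time by a min--max minimal sphere of index at most $i$, taking the upper Dini derivative should give
\[
\frac{d}{dt}L_i(g(t))\le -\frac{\Lambda(t)}{2}L_i(g(t))-4\pi+(\text{gain from the }|A|^2\text{ term}).
\]

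Here the index must enter, because without it one only gets a bound independent of $i$. I would use the Karpukhin--Nadirashvili--Penskoi--Polterovich inequality \cite{KNPP}: $\lambda_k(\Sigma)\operatorname{area}(\Sigma)\le 8\pi k$ on $S^2$. Testing the stability operator $-\Delta-(|A|^2+\operatorname{Ric}(\nu,\nu))$ on the span of the first $i+1$ eigenfunctions of $\Delta_\Sigma$ (index $\le i$ makes the form nonnegative on some vector in this span) and using the Gauss equation together with $\int K=4\pi$ should yield an estimate of the form
\[
\tfrac12\int_\Sigma R+\tfrac12\int_\Sigma|A|^2\ \lesssim\ 4\pi(\text{linear in } i)\cdot(\ldots).
\]
Combining this with the area evolution produces an ODE whose integration from $0$ to $T$ gives $L_i(g_0)\le 12\pi(i+1)/\Lambda_0$. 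At the singular time the rescaled widths converge to the round values $4\pi$, so the boundary term vanishes, since $L_i(g(t))\to 0$ as $t\to T$. The constant $12\pi=8\pi\cdot\tfrac32$ suggests the mechanism: the $8\pi$ from \cite{KNPP}, multiplied by the time bound $T\le 3/(2\Lambda_0)$. Heuristically, one shows $\frac{d}{dt}L_i\ge -8\pi(i+1)$, then integrates: $L_i(g_0)\le 8\pi(i+1)T\le 12\pi(i+1)/\Lambda_0$.

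So the direction is a lower bound on the rate of decrease, obtained from a negative-index bound on $\int(R-\operatorname{Ric}(\nu,\nu))$. Concretely, from the $(i+1)$-dimensional span of eigenfunctions and the index bound, pick $u$ in the span with $Q(u,u)\ge0$. The hardest part is extracting $\int_\Sigma(R-\operatorname{Ric}(\nu,\nu))\le 8\pi(i+1)$ from this. The difficulty is that a general test function is not constant, so I would instead consider the area variation of conformal-type sweepouts. I also expect technical care to be needed in justifying the derivative of the width through min--max spheres, using the multiplicity-one theorem of \cite{WZ} and the index bounds.
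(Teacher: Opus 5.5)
Your outer framework matches the paper's: Ricci flow, the comparison $R_{g(t)}\ge \Lambda_0/(1-\tfrac23\Lambda_0 t)$ giving $T\le 3/(2\Lambda_0)$, an a.e. bound $\frac{d}{dt}L_i(g(t))\ge -8\pi(i+1)$ computed on a single min--max sphere of index $\le i$ (Frankel plus Wang--Zhou multiplicity one), and integration up to $T$, where $L_i\to 0$. Your first displayed inequality bounds $\frac{d}{dt}L_i$ from above, which is the wrong direction: integrating it bounds $L_i(g_0)$ from below, so discard it.

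The genuine gap is the step you yourself call the hardest, and it carries the whole proof. For a minimal sphere $\Sigma$ of index $\le i$ in positive Ricci curvature you need
\[
\int_\Sigma q\,dA\le 8\pi i,\qquad q:=\operatorname{Ric}(\nu,\nu)+|A|^2 .
\]
This suffices because the Gauss equation in the form $R-\operatorname{Ric}(\nu,\nu)=2K_\Sigma+q$, together with Gauss--Bonnet, gives $\int_\Sigma(R-\operatorname{Ric}(\nu,\nu))\,dA=8\pi+\int_\Sigma q\,dA$. Your split $\tfrac12R+K_\Sigma+\tfrac12|A|^2$ is correct but hides that the remainder is exactly the stability potential.

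Your proposed route does not give this estimate. Test $Q(u,u)=\int|\nabla u|^2-\int qu^2$ on the first $i+1$ eigenfunctions of the induced Laplacian and apply KNPP to $g_\Sigma$. You only get $\int qu^2\le \frac{8\pi i}{\operatorname{area}(\Sigma)}\int u^2$ for some nonconstant $u$. Since $q$ is not constant, this says nothing about $\int q$. Balanced conformal (Hersch-type) test maps handle $i=1$, which is the Marques--Neves case, but do not by themselves give a bound linear in $i$.

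The missing idea is to apply KNPP to a different metric on $\Sigma$: the conformal metric $\widehat g=q\,g_\Sigma$. It is a genuine metric because $q>0$ when $\operatorname{Ric}>0$.

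\begin{itemize}
\item In dimension two the Dirichlet energy is conformally invariant, and $\int u^2\,dA_{\widehat g}=\int qu^2\,dA$. Hence $Q(u,u)=\int|\nabla u|^2\,dA-\int u^2\,dA_{\widehat g}$. The Laplace eigenvalues of $\widehat g$ are exactly the weighted eigenvalues of $-\Delta u=\lambda q u$.
\item On the span of the first $i+1$ eigenfunctions of $\Delta_{\widehat g}$, index $\le i$ produces $u\neq 0$ with $Q(u,u)\ge 0$. This forces $\lambda_i(\widehat g)\ge 1$. The paper phrases it as: at most $m=\operatorname{index}$ weighted eigenvalues lie below $1$, so $\lambda_m\ge 1$.
\item KNPP applied to $\widehat g$ then gives $\int_\Sigma q\,dA=\operatorname{area}(\widehat g)\le 8\pi i/\lambda_i(\widehat g)\le 8\pi i$.
\end{itemize}

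With this estimate in place, the rest of your plan goes through. The one remaining technical step is justifying the derivative formula. The paper perturbs the flow to a path of metrics that are bumpy at almost every time, differentiates the widths there, and then passes to the limit.
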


\begin{remark}
    If one drops the assumption \(\operatorname{Ric}_{g_0}>0\) in Theorem \ref{thm:higher-ss-width-bound}, the statement does not hold even for \(i=1\), as proved by Montezuma \cite{M}.
\end{remark}

Let \(q\ge 4\), and let \(\Gamma^q\) denote the Banach manifold of \(C^q\) Riemannian metrics on \(S^3\).

\begin{lemma}
\label{lem:higher-ss-derivative}
Let \(g:[a,b]\to \Gamma^q\) be a smooth embedding. Then there exist a smooth embedding \(h:[a,b]\to \Gamma^q\), arbitrarily close to \(g\) in the smooth topology, and a subset \(J\subset [a,b]\) of full Lebesgue measure such that the following properties hold.

For every \(i=1,\ldots,4\), the function
\[
t\longmapsto L_i(h(t))
\]
is differentiable at every \(\tau\in J\). Moreover, for every \(\tau\in J\) and every \(i=1,\ldots,4\), there exist pairwise disjoint embedded minimal two-spheres
\[
\Sigma^i_{1},\ldots,\Sigma^i_{N^i}
\subset (S^3,h(\tau))
\]
of class \(C^q\) and positive integers \(m^i_{1},\ldots,m^i_{N^i}\) such that
\[
L_i(h(\tau))
=
\sum_{j=1}^{N^i}
m^i_{j}\operatorname{area}_{h(\tau)}(\Sigma^i_{j}),
\]
\[
\sum_{j=1}^{N^i}
\operatorname{index}_{h(\tau)}(\Sigma^i_{j})
\le i,
\]
and
\[
\left.\frac{d}{dt}\right|_{t=\tau}L_i(h(t))
=
\frac12
\sum_{j=1}^{N^i}
m^i_{j}
\int_{\Sigma^i_{j}}
\operatorname{tr}_{\Sigma^i_{j}}(\partial_t h(\tau))\,dA_{h(\tau)}.
\]
\end{lemma}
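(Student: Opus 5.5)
The strategy follows the Marques--Neves approach to the Weyl law and the Irie--Marques--Neves density argument, adapted to Simon--Smith widths. The first step is to check that each \(L_i\) is locally Lipschitz along a path of metrics. If \(g,g'\) are \(C^0\)-close with \((1+\varepsilon)^{-1}g\le g'\le (1+\varepsilon)g\), then \(\operatorname{area}_{g'}(\Sigma)\le (1+\varepsilon)\operatorname{area}_g(\Sigma)\) for every surface. The sweepout classes \(\mathcal P_i\) do not depend on the metric, so \(L_i(g')\le(1+\varepsilon)L_i(g)\). Hence \(t\mapsto L_i(g(t))\) is Lipschitz on \([a,b]\), and by Rademacher's theorem it is differentiable on a set of full measure. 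This holds for any path, so differentiability alone is easy. The real content is identifying the derivative with the first variation of area on a min-max minimal surface.

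The perturbation \(h\) is chosen to make the metrics bumpy along the path. By White's bumpy metric theorem, in its parametric transversality form, the bumpy metrics form a residual set. A generic smooth perturbation \(h\) of the path \(g\) then satisfies the following: for a.e. \(t\), more precisely off a countable or measure-zero set, \(h(t)\) is bumpy for embedded minimal spheres. This should be done by applying Sard--Smale to the projection from the moduli space of pairs (metric, minimal sphere) to the path parameter. For a one-parameter family the degenerate metrics form a set of parameters of measure zero. Let \(J\) be the set of \(\tau\) where \(h(\tau)\) is bumpy and every \(L_i(h(\cdot))\) is differentiable; this set has full measure. At each \(\tau\in J\), Simon--Smith min-max theory together with the index bounds for Simon--Smith theory (Marques--Neves style, as in Wang--Zhou or Haslhofer--Ketover) gives that \(L_i(h(\tau))\) is achieved by a union of pairwise disjoint embedded minimal spheres with multiplicities and total index at most \(i\). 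This is the representation claimed in the lemma.

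The derivative formula comes from a two-sided squeeze. Fix \(\tau\in J\). By bumpiness, the set of stationary integral varifolds in \((S^3,h(\tau))\) supported on minimal spheres, with mass \(L_i(h(\tau))\) and index at most \(i\), is finite. Each nondegenerate sphere \(\Sigma_j\) persists smoothly for \(t\) near \(\tau\) as \(\Sigma_j(t)\), and \(\frac{d}{dt}\operatorname{area}_{h(t)}(\Sigma_j(t))|_{t=\tau}=\frac12\int_{\Sigma_j}\operatorname{tr}_{\Sigma_j}\partial_t h\,dA\), because the normal variation contributes nothing to first order by minimality. Take \(t_k\to\tau\). For each \(t_k\), choose a min-max representative \(V_k\) for \(L_i(h(t_k))\) with index at most \(i\). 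By compactness, which holds because index bounds give curvature estimates for spheres via Sharp's theorem, \(V_k\) subconverges smoothly with multiplicity to one of the finitely many configurations \(V\) at \(\tau\). Nondegeneracy forces \(V_k\) to equal the deformed configuration \(V(t_k)\) for large \(k\). Then \(L_i(h(t_k))=\|V(t_k)\|_{h(t_k)}\), and since \(L_i(h(\tau))=\|V\|_{h(\tau)}\), the difference quotients along \(t_k\) equal those of \(t\mapsto \|V(t)\|\). Those quotients converge to the trace formula. Because \(L_i(h(\cdot))\) is known to be differentiable at \(\tau\), it suffices to compute the limit along a single such sequence, for example \(t_k\downarrow\tau\). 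That computation yields the formula with the limiting configuration as the \(\Sigma^i_j\).

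The main obstacle is this compactness and identification step. It needs three things. First, a version of the Simon--Smith min-max theorem that produces, for every metric and not just bumpy ones, a representative with the index bound. Second, the compactness theory, so that the limiting varifold is itself among the min-max candidates at \(\tau\). Third, the generic-path bumpiness, which is only available after perturbing \(g\) to \(h\). I would first try to deduce the index bound from the Marques--Neves deformation argument adapted by Haslhofer--Ketover and Wang--Zhou. For the a.e.-bumpy path, I would handle it via the Fredholm index-zero projection argument as in Irie--Marques--Neves.
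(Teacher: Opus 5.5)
Your proposal is correct and follows essentially the same route as the paper. Both arguments perturb to an a.e.-bumpy path, use Lipschitz continuity and Rademacher, take min-max representatives of index $\le i$ at nearby times $t_k$, pass via Sharp's compactness to a smooth multiplicity-one limit at the bumpy time $\tau$, and use nondegeneracy (implicit function theorem) with the first variation of area in the metric to identify the derivative with the limiting configuration.

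The differences are minor. You prove the Lipschitz bound directly rather than citing Marques--Neves. Your finiteness remark is not needed. You leave implicit the monotonicity lower area bound that the paper uses to bound the number of components and multiplicities, so that they stabilize along a subsequence.
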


\begin{proof}
The proof follows the arguments in Lemma 2.2.3 of Ambrozio--Montezuma \cite{AM} and Lemma 2 of Marques--Neves--Song \cite{MNS}.

By White's bumpy metric theorem \cite{W1,W} and the Sard--Smale transversality theorem \cite{Sm}, the path \(g\) can be perturbed arbitrarily close in the smooth topology to a smooth embedding
\[
h:[a,b]\to \Gamma^q
\]
such that the set of times \(\tau\in[a,b]\) for which \(h(\tau)\) admits no embedded minimal surface with a nontrivial Jacobi field has full measure.

By the same argument as in Lemma 4.1 in Marques--Neves \cite{MN}, for every fixed \(i=1,\ldots,4\), the function
\[
W_i(t):=L_i(h(t))
\]
is Lipschitz. By Rademacher's theorem, \(W_i\) is differentiable for
almost every \(t\). Intersecting the corresponding differentiability sets for each width with the set of bumpy times, we obtain a full-measure set \(J\subset[a,b]\) such that, for every \(\tau\in J\), the metric \(h(\tau)\) is bumpy and every function \(W_i(t)=L_i(h(t))\) is differentiable at \(\tau\).

Fix \(\tau\in J\) and \(i\in\{1,\ldots,4\}\). Let \(t_\ell\to \tau\), \(t_\ell\ne\tau\).
Since \(W_i\) is differentiable at \(\tau\),
\[
W_i'(\tau)
=
\lim_{\ell\to\infty}
\frac{W_i(t_\ell)-W_i(\tau)}{t_\ell-\tau}.
\]
For each \(\ell\), the Simon--Smith min--max theorem (see Smith \cite{SS}, Colding--De Lellis \cite{CD}, De Lellis--Pellandini \cite{DP} and Ketover \cite{K}) and the Morse index bound (see Theorem 1.2 and Subsection 1.3 in Marques--Neves \cite{MN1}) give an integral stationary varifold
\[
V^i_\ell
=
\sum_{a=1}^{N^i_\ell}
m^i_{\ell,a}|\Sigma^i_{\ell,a}|,
\]
where \(\bigl\{\Sigma^i_{\ell,1},\ldots,\Sigma^i_{\ell,N^i_\ell}\bigr\}\) is a collection of pairwise disjoint embedded minimal two-spheres in
\((S^3,h(t_\ell))\), \(m^i_{\ell,a}\in\mathbb N\), and
\[
W_i(t_\ell)
=
\sum_{a=1}^{N^i_\ell}
m^i_{\ell,a}\operatorname{area}_{h(t_\ell)}(\Sigma^i_{\ell,a}),
\]
with
\[
\sum_{a=1}^{N^i_\ell}
\operatorname{index}_{h(t_\ell)}(\Sigma^i_{\ell,a})
\le i.
\]

Since \(W_i(t_\ell)\to W_i(\tau)\), the masses of the varifolds \(V^i_\ell\) are uniformly bounded from above. The monotonicity formula (see Simon \cite{Si}) gives a uniform positive lower bound for the area
of any closed embedded minimal surface in the metrics \(h(t_\ell)\), for \(\ell\) large. Hence, the number of components \(N^i_\ell\) and the multiplicities \(m^i_{\ell,a}\) are uniformly bounded. Passing to a subsequence and relabeling we may assume that \(N^i_\ell=\widetilde N^i\) and \(m^i_{\ell,a} = m^i_a\) for \(a=1,\ldots,\widetilde N^i\).

By the compactness theorem for embedded minimal surfaces with uniformly bounded area and index (see Sharp \cite{Sh} and Ambrozio--Buzano--Carlotto--Sharp \cite{ABCS}), since \(h(\tau)\) is bumpy, after passing to a further subsequence each sequence \(\Sigma^i_{\ell,a}\) converges smoothly and graphically with multiplicity one to an embedded minimal two-sphere in \((S^3,h(\tau))\). The limiting spheres are pairwise disjoint or equal by the maximum principle. After collecting equal limits and adding the corresponding multiplicities, we obtain pairwise disjoint embedded minimal two-spheres
\[
\Sigma^i_{1},\ldots,\Sigma^i_{N^i}
\subset (S^3,h(\tau))
\]
and positive integers \(m^i_{1},\ldots,m^i_{N^i}\) such that
\[
W_i(\tau)
=
\sum_{j=1}^{N^i}
m^i_{j}\operatorname{area}_{h(\tau)}(\Sigma^i_{j}),
\]
and
\[
\sum_{j=1}^{N^i}
\operatorname{index}_{h(\tau)}(\Sigma^i_{j})
\le i.
\]

It remains to compute the derivative. Since \(h(\tau)\) is bumpy, each limiting
sphere \(\Sigma^i_j\) is nondegenerate. By the smooth graphical convergence above, the minimal surface equation, and the invertibility of the Jacobi operator of \(\Sigma^i_j\), we have
\[
\operatorname{area}_{h(t_\ell)}(\Sigma^i_{\ell,a})
-
\operatorname{area}_{h(\tau)}(\Sigma^i_j)
=
\frac{t_\ell-\tau}{2}
\int_{\Sigma^i_j}
\operatorname{tr}_{\Sigma^i_j}(\partial_t h(\tau))\,dA_{h(\tau)}
+
o(|t_\ell-\tau|)
\]
whenever $\Sigma^i_{\ell,a} \to \Sigma^i_j$. Thus,
\[
\begin{aligned}
W_i'(\tau)
&=
\lim_{\ell\to\infty}
\frac{W_i(t_\ell)-W_i(\tau)}{t_\ell-\tau} \\
&=
\frac12
\sum_{j=1}^{N^i}
m^i_j
\int_{\Sigma^i_j}
\operatorname{tr}_{\Sigma^i_j}(\partial_t h(\tau))\,dA_{h(\tau)}.
\end{aligned}
\]
This completes the proof.
\end{proof}

\begin{lemma}
\label{lem:integrated-ricci-derivative}
Let \(g(t)\), \(t\in[a,b]\), be a smooth solution of the unnormalised Ricci flow on \(S^3\),
\[
\partial_t g(t)=-2\operatorname{Ric}_{g(t)}.
\]
Assume that
\[
\operatorname{Ric}_{g(t)}>0
\]
for every \(t\in[a,b]\). Then, for every \(i=1,\ldots,4\) and every \(a\le t_1<t_2\le b\),
\[
L_i(g(t_2))-L_i(g(t_1))
\ge
-8\pi(i+1)(t_2-t_1).
\]
\end{lemma}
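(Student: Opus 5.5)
Under the Ricci flow the metric derivative is \(\partial_t g=-2\operatorname{Ric}_{g(t)}\). Lemma \ref{lem:higher-ss-derivative} expresses the derivative of \(L_i\) as an integral of \(\operatorname{tr}_\Sigma(\partial_t g)\) over min--max minimal spheres. The plan is to bound \(-\int_\Sigma \operatorname{tr}_\Sigma \operatorname{Ric}\) from below for each minimal sphere, using Gauss--Bonnet together with the eigenvalue inequality of \cite{KNPP} and the index bound. We then integrate in \(t\).

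\textbf{Step 1 (perturbation).} The path \(t\mapsto g(t)\) is a smooth embedding into \(\Gamma^q\); if necessary we reparametrize or treat it locally. Apply Lemma \ref{lem:higher-ss-derivative} to obtain a nearby path \(h\) and a full-measure set \(J\). Since \(h\) is smoothly close to \(g\), we have \(\operatorname{Ric}_{h(t)}>0\), and \(\partial_t h\) is close to \(-2\operatorname{Ric}_{h(t)}\) up to an error \(\varepsilon\). By Lemma 4.1 of \cite{MN}, each \(L_i(h(t))\) is Lipschitz, hence absolutely continuous. Therefore
\[
L_i(h(t_2))-L_i(h(t_1))=\int_{t_1}^{t_2}\frac{d}{dt}L_i(h(t))\,dt ,
\]
and it suffices to prove \(\frac{d}{dt}L_i(h(t))\ge -8\pi(i+1)-C\varepsilon\) on \(J\). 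At the end we let \(h\to g\), using continuity of \(L_i\) in the metric.

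\textbf{Step 2 (pointwise bound on each sphere).} On a minimal sphere \(\Sigma\) with unit normal \(\nu\), the trace \(\operatorname{tr}_\Sigma\operatorname{Ric}\) equals \(R-\operatorname{Ric}(\nu,\nu)\). By the Gauss equation, \(R-\operatorname{Ric}(\nu,\nu)=\operatorname{Ric}(\nu,\nu)+|A|^2+2K\). Lemma \ref{lem:higher-ss-derivative} gives
\[
\frac{d}{dt}L_i=-\sum_j m_j\int_{\Sigma_j}\operatorname{tr}\operatorname{Ric},
\]
up to \(\varepsilon\). Gauss--Bonnet gives \(\int 2K=8\pi\), so we need
\[
\sum_j m_j\int_{\Sigma_j}\bigl(\operatorname{Ric}(\nu,\nu)+|A|^2\bigr)\le 8\pi(i+1)-8\pi\sum_j m_j .
\]
The sign of this quantity does not obviously work out, so the key input is the index. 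Let \(k_j=\operatorname{index}(\Sigma_j)\), with \(\sum_j k_j\le i\). Consider the Jacobi operator \(-\Delta-(\operatorname{Ric}(\nu,\nu)+|A|^2)\); its \((k_j+1)\)-st eigenvalue is nonnegative. Use the KNPP bound \(\lambda_{k}(\Sigma)\operatorname{area}\le 8\pi k\) and test functions built from the first \(k_j+1\) eigenfunctions of \(\Delta_\Sigma\). This should give
\[
\int_{\Sigma_j}\bigl(\operatorname{Ric}(\nu,\nu)+|A|^2\bigr)\le \lambda_{k_j+1}\operatorname{area}\le 8\pi(k_j+1).
\]
Combined with Gauss--Bonnet, this bounds \(\int_{\Sigma_j}\operatorname{tr}\operatorname{Ric}\) by \(8\pi(k_j+2)\) at worst. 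I suspect the intended normalization is the bound in the form
\[
\int_{\Sigma_j}\operatorname{tr}_{\Sigma_j}\operatorname{Ric}\le 8\pi(k_j+1)\quad\text{or similar.}
\]
This would come via Hersch-type balancing applied directly to the stability inequality with test functions, in the spirit of the Marques--Neves argument for \(i=1\).

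\textbf{Step 3 (summing and the main obstacle).} We sum over components with multiplicities and get a bound \(8\pi(i+1)\). This step is the main obstacle. Multiplicities \(m_j\ge 2\) and several components would spoil a naive sum. I would first try to show, via positive Ricci curvature (Frankel's theorem), that two disjoint minimal spheres cannot coexist. Hence \(N^i=1\), and the multiplicity can be controlled, or else absorbed since a multiplicity-\(m\) sphere of index \(k\le i\) still gives \(\int_\Sigma\operatorname{tr}\operatorname{Ric}\le 8\pi(k+1)\). The problem is the factor \(m\); here I would invoke the Wang--Zhou multiplicity-one theorem for the bumpy metric \(h(\tau)\), which has positive Ricci curvature, to get \(m=1\). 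Then \(\frac{d}{dt}L_i\ge -8\pi(i+1)\) on \(J\). Integrating and letting the perturbation tend to zero proves the lemma.
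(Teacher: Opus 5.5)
Your outer structure matches the paper's. You perturb the path via Lemma~\ref{lem:higher-ss-derivative} and use Frankel and Wang--Zhou to reduce to a single multiplicity-one sphere $\Gamma$ of index $k\le i$. You rewrite $-\int_\Gamma\operatorname{tr}_\Gamma\operatorname{Ric}$ as $-8\pi-\int_\Gamma q\,dA$ with $q=\operatorname{Ric}(\nu,\nu)+|A|^2$. Then you integrate the a.e.\ derivative bound and absorb the error $\partial_t h+2\operatorname{Ric}_h$.

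The genuine gap is Step~2, which is the heart of the lemma: you need $\int_\Gamma q\,dA\le 8\pi k$, and you do not establish it. Testing the Jacobi form on the span of eigenfunctions $\phi_0,\dots,\phi_k$ of the induced Laplacian only gives some $u\neq0$ in that span with $\int_\Gamma qu^2\le\int_\Gamma|\nabla u|^2$. That controls a $u^2$-weighted average of $q$ with $u$ uncontrolled, not $\int_\Gamma q$, so your inequality $\int_\Gamma q\le\lambda_{k+1}\operatorname{area}$ is unsupported. Even granting it, it gives $8\pi(k+1)$ and hence only $\frac{d}{dt}L_i\ge-8\pi(i+2)$, as you note yourself. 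Hersch balancing gives the sharp bound only for $k=1$ (the Marques--Neves case) and does not extend to higher index. So the conclusion $-8\pi(i+1)$ in Step~3 is asserted rather than proved.

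The missing idea is to change the metric on the sphere rather than the test functions. Since $\operatorname{Ric}>0$, we have $q>0$, so $\widehat g=q\,g_\Gamma$ is a smooth metric on $S^2$. By conformal invariance of the Dirichlet energy in dimension two, its Laplace eigenproblem is exactly the weighted problem $-\Delta u=\lambda qu$, and $\operatorname{area}_{\widehat g}(S^2)=\int_\Gamma q\,dA$. For these eigenfunctions $Q_\Gamma(u,u)=(\lambda-1)\int_\Gamma qu^2\,dA$. If $\lambda_k(\widehat g)<1$, then $Q_\Gamma$ would be negative definite on the $(k+1)$-dimensional span of the eigenfunctions for $\lambda_0=0,\lambda_1,\dots,\lambda_k$, constants included, contradicting index $\le k$. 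Hence $\lambda_k(\widehat g)\ge1$; counting the constant eigenfunction here is what removes your off-by-one. The Karpukhin--Nadirashvili--Penskoi--Polterovich inequality applied to $\widehat g$ then gives $\int_\Gamma q\,dA=\operatorname{area}_{\widehat g}(S^2)\le 8\pi k/\lambda_k(\widehat g)\le 8\pi i$. With this, $\frac{d}{dt}L_i\ge-8\pi(i+1)$ on the full-measure set, and the rest of your argument goes through as in the paper.
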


\begin{proof}
Fix $i \in \{1,\ldots,4\}$. Write
\[
W_i(t)=L_i(g(t)).
\]
We first prove the estimate under the additional assumption that the conclusion of Lemma \ref{lem:higher-ss-derivative} holds for the path \(g(t)\) on a subset $J \subset [a,b]$ of full measure.

Fix $t \in J$. Then, since \(\operatorname{Ric}_{g(t)}>0\), by the Frankel property and the multiplicity-one theorem of
Wang--Zhou \cite{WZ}, there exists an embedded minimal sphere
\[
\Gamma_t\subset (S^3,g(t))
\]
such that
\[
W_i(t)=\operatorname{area}_{g(t)}(\Gamma_t),
\]
\[
\operatorname{index}_{g(t)}(\Gamma_t)\le i,
\]
and
\[
W_i'(t)
=
\frac12
\int_{\Gamma_t}
\operatorname{tr}_{\Gamma_t}(\partial_t g(t))\,dA_{g(t)}.
\]

Since \(\partial_t g=-2\operatorname{Ric}\), we have
\[
W_i'(t)
=
-\int_{\Gamma_t}
\operatorname{tr}_{\Gamma_t}\operatorname{Ric}\,dA
=
-\int_{\Gamma_t}
\left(R-\operatorname{Ric}(\nu,\nu)\right)\,dA.
\]
By the Gauss equation and the minimal surface equation,
\[
R-\operatorname{Ric}(\nu,\nu)
=
2K_{\Gamma_t}
+
\operatorname{Ric}(\nu,\nu)
+
|A|^2.
\]
Since \(\Gamma_t\simeq S^2\), the Gauss--Bonnet formula gives
\[
\int_{\Gamma_t}2K_{\Gamma_t}\,dA=8\pi.
\]
Thus
\[
W_i'(t)
=
-8\pi
-
\int_{\Gamma_t}
\left(\operatorname{Ric}(\nu,\nu)+|A|^2\right)\,dA.
\]

It remains to estimate the last integral. Let
\[
q=\operatorname{Ric}(\nu,\nu)+|A|^2.
\]
Since \(\operatorname{Ric}>0\), we have \(q>0\). Let
\[
m=\operatorname{index}_{g(t)}(\Gamma_t)\le i.
\]
Let \(u\) be a nonzero solution of the following weighted Laplace eigenvalue problem
\begin{equation}
\label{eq:eigenvalue}
    -\Delta u=\lambda q u
\end{equation}
on \(\Gamma_t\). The second variation formula gives
\[
Q_{\Gamma_t}(u,u)
=
\int_{\Gamma_t}|\nabla u|^2\,dA
-
\int_{\Gamma_t}q u^2\,dA.
\]
Integration by parts gives
\[
\int_{\Gamma_t}|\nabla u|^2\,dA
=
\lambda
\int_{\Gamma_t}q u^2\,dA.
\]
Hence
\[
Q_{\Gamma_t}(u,u)
=
(\lambda-1)\int_{\Gamma_t}q u^2\,dA.
\]
Thus \(Q_{\Gamma_t}\) is negative on the span of the solutions of \eqref{eq:eigenvalue} with \(\lambda<1\). Hence, the number of eigenvalues \(\lambda<1\), counted with multiplicity, is at most $m=\operatorname{index}_{g(t)}(\Gamma_t)$.

Let
\[
0=\lambda_0<\lambda_1\le\lambda_2\le\ldots
\]
be the weighted spectrum for \eqref{eq:eigenvalue}. Then
\[
\lambda_m\ge 1.
\]

Now define a conformal metric on \(\Gamma_t\) by
\[
\widehat g=q\,g_{\Gamma_t}.
\]
Since $\Gamma_t$ has dimension two, the Dirichlet energy is conformally invariant. Moreover,
\[
\int_{\Gamma_t}u^2\,dA_{\widehat g}
=
\int_{\Gamma_t}q u^2\,dA,
\]
and hence the weighted Laplace eigenvalue problem above is exactly the ordinary Laplace eigenvalue problem for \((S^2,\widehat g)\). Also,
\[
\operatorname{area}_{\widehat g}(S^2)
=
\int_{\Gamma_t}q\,dA.
\]
By the Karpukhin--Nadirashvili--Penskoi--Polterovich sharp isoperimetric inequality for Laplace eigenvalues on \(S^2\) \cite{KNPP} (see also Hersch \cite{He}, Nadirashvili \cite{N}, Petrides \cite{P} and Nadirashvili--Sire \cite{NS}),
\[
\lambda^{\operatorname{Lap}}_m(\widehat g)\operatorname{area}_{\widehat g}(S^2)\le 8\pi m.
\]
Since \(\lambda^{\operatorname{Lap}}_m(\widehat g) = \lambda_m \ge 1\), we obtain
\[
\int_{\Gamma_t}
\left(\operatorname{Ric}(\nu,\nu)+|A|^2\right)\,dA
=
\int_{\Gamma_t}q\,dA
\le 8\pi m
\le 8\pi i.
\]
Consequently, for almost every \(t\),
\[
W_i'(t)
\ge
-8\pi-8\pi i
=
-8\pi(i+1).
\]
Integrating from \(t_1\) to \(t_2\) gives
\[
W_i(t_2)-W_i(t_1)
\ge
-8\pi(i+1)(t_2-t_1).
\]

We now remove the additional assumption. Applying Lemma \ref{lem:higher-ss-derivative}, we obtain smooth paths \(g_\ell(t)\)
for \(t\in[a,b]\) and \(\ell \in \mathbb{N}\) such that \(g_\ell\to g\) in \(C^1_tC^\infty_x\), and such that the conclusion of Lemma \ref{lem:higher-ss-derivative} holds for \(g_\ell(t)\) on full-measure sets \(J_\ell\subset[a,b]\). We can also assume, after taking
\(\ell\) sufficiently large, that \(\operatorname{Ric}_{g_\ell(t)}>0\)
for every \(t\in[a,b]\).

Set
\[
W_i^\ell(t)=L_i(g_\ell(t)).
\]
Since \(g_\ell\to g\) in the smooth topology and \(g(t)\) solves
\[
\partial_t g(t)=-2\operatorname{Ric}_{g(t)},
\]
we have
\[
E_\ell(t):=\partial_t g_\ell(t)+2\operatorname{Ric}_{g_\ell(t)}
\to 0
\]
uniformly on \([a,b]\times S^3\).

Fix \(t\in J_\ell\). By the conclusion of Lemma \ref{lem:higher-ss-derivative}, the Frankel property, and the multiplicity-one theorem of Wang--Zhou \cite{WZ}, there exists an embedded minimal
two-sphere
\[
\Gamma_{\ell,t}\subset (S^3,g_\ell(t))
\]
such that
\[
W_i^\ell(t)=\operatorname{area}_{g_\ell(t)}(\Gamma_{\ell,t}),
\]
\[
\operatorname{index}_{g_\ell(t)}(\Gamma_{\ell,t})\le i,
\]
and
\[
(W_i^\ell)'(t)
=
\frac12
\int_{\Gamma_{\ell,t}}
\operatorname{tr}_{\Gamma_{\ell,t}}(\partial_t g_\ell(t))\,dA_{g_\ell}.
\]

Since
\[
\partial_t g_\ell
=
-2\operatorname{Ric}_{g_\ell}+E_\ell,
\]
we have
\[
(W_i^\ell)'(t)
=
-\int_{\Gamma_{\ell,t}}
\left(R_{g_\ell}-\operatorname{Ric}_{g_\ell}(\nu,\nu)\right)\,dA_{g_\ell}
+
\frac12
\int_{\Gamma_{\ell,t}}
\operatorname{tr}_{\Gamma_{\ell,t}}E_\ell\,dA_{g_\ell}.
\]
The first term is bounded from below by \(-8\pi(i+1)\) by the argument proved above.

It remains to estimate the second term. Define
\[
\varepsilon_\ell
=
\frac12
\sup_{\substack{t\in[a,b],\,x\in S^3\\
P\subset T_xS^3,\ \dim P=2}}
\left|
\operatorname{tr}_{P,g_\ell(t)} E_\ell(t)
\right|.
\]
Since \(E_\ell\to0\) uniformly, we have
\[
\varepsilon_\ell\to0.
\]
Also,
\[
\frac12
\int_{\Gamma_{\ell,t}}
\operatorname{tr}_{\Gamma_{\ell,t}}E_\ell\,dA_{g_\ell}
\ge
-\varepsilon_\ell
\operatorname{area}_{g_\ell(t)}(\Gamma_{\ell,t})
=
-\varepsilon_\ell W_i^\ell(t).
\]
Thus, for every \(t\in J_\ell\),
\[
(W_i^\ell)'(t)
\ge
-8\pi(i+1)-\varepsilon_\ell W_i^\ell(t).
\]

Since \(g_\ell(t)\to g(t)\) uniformly in the smooth topology on the compact interval \([a,b]\), and the widths are continuous with respect to the smooth topology, there exists a constant \(C>0\), independent of \(\ell\), such that
\[
W_i^\ell(t)\le C
\]
for all \(t\in[a,b]\) and all sufficiently large \(\ell\). Then
\[
(W_i^\ell)'(t)
\ge
-8\pi(i+1)-C\varepsilon_\ell
\]
for almost every \(t\in[a,b]\). Integrating from \(t_1\) to \(t_2\) gives
\[
W_i^\ell(t_2)-W_i^\ell(t_1)
\ge
-\left(8\pi(i+1)+C\varepsilon_\ell\right)(t_2-t_1).
\]
Finally, \(W_i^\ell(t)\to W_i(t)\) uniformly on \([a,b]\). Letting
\(\ell\to\infty\), we obtain
\[
W_i(t_2)-W_i(t_1)
\ge
-8\pi(i+1)(t_2-t_1).
\]
This proves the lemma.
\end{proof}

\begin{proof}[Proof of Theorem ~\ref{thm:higher-ss-width-bound}]
Let \(g(t)\), \(t\in[0,T)\), be the unnormalised Ricci flow starting from \(g_0\):
\[
\partial_t g(t)=-2\operatorname{Ric}_{g(t)},
\qquad
g(0)=g_0.
\]
By Hamilton's theorem \cite{H}, positive Ricci curvature is preserved and the solutions converge to a round point as \(t \to T\). Then
\[
\lim_{t\to T}L_i(g(t))=0.
\]

By the evolution equation of the scalar curvature
\[
\partial_t R=\Delta R+2|\operatorname{Ric}|^2.
\]
In dimension three,
\[
|\operatorname{Ric}|^2\ge \frac13R^2.
\]
Hence,
\[
\partial_t R\ge \Delta R+\frac23R^2.
\]
Let \(\rho(t)\) be the solution of
\[
\rho'(t)=\frac23\rho(t)^2,
\qquad
\rho(0)=\Lambda_0.
\]
Explicitly,
\[
\rho(t)
=
\frac{\Lambda_0}{1-\frac23\Lambda_0 t}.
\]
Since \(R_{g_0}\ge \Lambda_0=\rho(0)\), the parabolic maximum principle gives
\[
R_{g(t)}\ge \rho(t)
=
\frac{\Lambda_0}{1-\frac23\Lambda_0 t}
\]
for all
\[
0\le t<\min\left\{T,\frac{3}{2\Lambda_0}\right\}.
\]
Thus the flow cannot exist smoothly past \(t=3/(2\Lambda_0)\). Hence
\[
T\le \frac{3}{2\Lambda_0}.
\]

Fix \(i\in\{1,\ldots,4\}\). By Lemma \ref{lem:integrated-ricci-derivative}, for every
\(0\le s<T\),
\[
L_i(g(s))-L_i(g_0)
\ge
-8\pi(i+1)s.
\]
Equivalently,
\[
L_i(g_0)
\le
L_i(g(s))+8\pi(i+1)s.
\]
Taking \(s\to T\), we obtain
\[
L_i(g_0)
\le
8\pi(i+1)T.
\]
Since \(T\le 3/(2\Lambda_0)\), this yields
\[
L_i(g_0)
\le
8\pi(i+1)\cdot \frac{3}{2\Lambda_0}
=
\frac{12\pi(i+1)}{\Lambda_0}.
\]
This proves the theorem.
\end{proof}

\begin{proof}[Proof of Theorem~\ref{thm:yau}]
Fix \(b>c>d>0\), and write
\[
E_a:=E(a,b,c,d).
\]
Let \(g_a\) denote the metric induced on \(E_a\) by the Euclidean metric of
\(\mathbb R^4\). For \(j=1,\ldots,4\), denote the coordinate planar minimal
two-spheres by
\[
P_j(a):=E_a\cap\{x_j=0\}.
\]

We first record a scalar curvature lower bound for \(E_a\), uniform for all
\(a>b\). Let
\[
A_a=\operatorname{diag}(a^{-2},b^{-2},c^{-2},d^{-2}).
\]
Then
\[
E_a=\{x\in\mathbb R^4:\langle A_a x,x\rangle=1\}.
\]
For \(v\in T_xE_a\), the second fundamental form is
\[
\mathrm{II}(v,v)=\frac{\langle A_a v,v\rangle}{|A_a x|}.
\]
Moreover,
\[
|A_a x|^2
=
\langle A_a^2x,x\rangle
\le
d^{-2}\langle A_a x,x\rangle
=
d^{-2}.
\]
Hence,
\[
|A_a x|\le d^{-1}.
\]

Let \(H_1=\{v_1=0\}\subset\mathbb R^4\). Since
\(\dim T_xE_a=3\) and \(\dim H_1=3\), we have
\[
\dim(T_xE_a\cap H_1)\ge2.
\]
For every \(v\in T_xE_a\cap H_1\),
\[
\langle A_a v,v\rangle
\ge
b^{-2}|v|^2.
\]
Then
\[
\mathrm{II}(v,v)
\ge
\frac{b^{-2}}{|A_a x|}|v|^2
\ge
\frac{d}{b^2}|v|^2.
\]
By the min-max characterization of eigenvalues, at least two principal curvatures
of \(E_a\) at \(x\) are bounded below by \(d/b^2\). Since \(E_a\) is strictly convex,
all principal curvatures are positive. Thus the Gauss equation gives
\[
R_{g_a}
=
2\sum_{1\le \alpha<\beta\le3}\kappa_\alpha\kappa_\beta
\ge
\frac{2d^2}{b^4} =: \Lambda_0.
\]
Also, since all principal curvatures of \(E_a\) are positive, \(g_a\) has positive
Ricci curvature.

Applying Theorem~\ref{thm:main} to \((E_a,g_a)\), we obtain four distinct embedded minimal two-spheres
\[
\Sigma_1(a),\ldots,\Sigma_4(a)\subset E_a
\]
such that
\[
\operatorname{area}_{g_a}(\Sigma_i(a))
\le
\frac{12\pi(i+1)}{\Lambda_0}
\le
\frac{60\pi}{\Lambda_0}
=
\frac{30\pi b^4}{d^2}
\]
for every \(i=1,\ldots,4\).

We now compare this uniform area bound with the areas of the planar spheres. The planar sphere \(P_1(a)\) is independent of \(a\), and hence has bounded area. On the other hand, for \(j=2,3,4\), the planar sphere \(P_j(a)\) is a two-dimensional ellipsoid with one semi-axis equal to \(a\). Consider the restriction of the coordinate function \(x_1\) to \(P_j(a)\).
Since \(|\nabla^{P_j(a)}x_1|\le1\), by the coarea formula,
\[
\operatorname{area}(P_j(a))
\ge
\int_{-a/2}^{a/2}
\mathcal H^1\bigl(P_j(a)\cap\{x_1=t\}\bigr)\,dt.
\]
For \(|t|\le a/2\), the curve \(P_j(a)\cap\{x_1=t\}\) is an ellipse with smaller semi-axis at least
\[
d\sqrt{1-\frac{t^2}{a^2}}
\ge
\frac{\sqrt3}{2}d.
\]
Therefore its length is at least \(\sqrt3\pi d\), and then
\[
\operatorname{area}(P_j(a))
\ge
\sqrt3\pi da
\]
for \(j=2,3,4\). Thus, for \(a\) sufficiently large,
\[
\operatorname{area}(P_j(a))
>
\frac{30\pi b^4}{d^2}
\]
for \(j=2,3,4\).

For such \(a\), none of the four minimal two-spheres
\(\Sigma_1(a),\ldots,\Sigma_4(a)\) can be equal to \(P_2(a),P_3(a)\), or \(P_4(a)\), since each \(\Sigma_i(a)\) has area at most \(30\pi b^4/d^2\), while each of \(P_2(a),P_3(a),P_4(a)\) has strictly larger area. Since the four spheres \(\Sigma_1(a),\ldots,\Sigma_4(a)\) are distinct, at most one of them can coincide with the remaining planar sphere \(P_1(a)\). Hence at least three of the four minimal spheres \(\Sigma_1(a),\ldots,\Sigma_4(a)\) are non-planar.
\end{proof}

\vspace{0.5cm} 
\noindent Department of Mathematics, University of Toronto, Toronto, Canada\\
\textit{E-mail address}: \texttt{talant.talipov@mail.utoronto.ca}

\end{document}